\newtheorem{thm}{Theorem}[section]
\newtheorem{prop}[thm]{Proposition}
\theoremstyle{definition}
\newtheorem{defin}[thm]{Definition}
\newtheorem{example}[thm]{Example}
\theoremstyle{remark}
\newtheorem{remark}[thm]{Remark}
\numberwithin{equation}{section}
\begin{document}

\title[RFD and Kac quotients of orthogonal quantum groups]{The RFD and Kac quotients of the Hopf$^*$-algebras of universal orthogonal quantum groups}

\author{Biswarup Das}
\address{Instytut Matematyczny, Uniwersytet Wroc\l awski, pl.Grunwaldzki 2/4, 50-384 Wroc\l aw, Poland}
\email{biswarup.das@math.uni.wroc.pl}

\author{Uwe Franz}
\address{Laboratoire de math\'ematiques de Besan\c{c}on,
Universit\'e de Bourgogne Franche-Comt\'e, 16, route de Gray, 25 030
Besan\c{c}on cedex, France}
\email{uwe.franz@univ-fcomte.fr}
\urladdr{http://lmb.univ-fcomte.fr/uwe-franz}

\author{Adam Skalski}
\address{Institute of Mathematics of the Polish Academy of Sciences,
ul.~\'Sniadeckich 8, 00--656 Warszawa, Poland
}
\email{a.skalski@impan.pl}

\keywords{Hopf $^*$-algebra; RFD property; Kac quotient; universal orthogonal quantum groups}
\subjclass[2020]{Primary 16T05, Secondary 20G42}

\maketitle


\begin{abstract}
We determine the Kac quotient and the RFD (residually finite dimensional) quotient for the Hopf$^*$-algebras associated to universal orthogonal quantum groups.
\end{abstract}

\section{Introduction}\label{sec-intro}
Compact quantum groups of Woronowicz \cite{woronowicz87} are often studied via their associated Hopf$^*$-algebras, the so-called CQG algebras \cite{Koor}. The CQG algebra carries all the group-theoretic information about the associated quantum group, such as its representation theory, the lattice of quantum subgroups (described via the lattice of the CQG quotients of the original algebra), or Kac property, but also for example encodes approximation properties of the natural operator algebraic completions. 

When studying a particular property describing a `simpler' class of objects, it is natural to ask whether a general object admits a largest subobject with the given property. And thus So\l tan, motivated by the considerations concerning quantum group compactifications, showed in \cite{soltan05} (see also \cite{tomatsu07}) that every compact quantum group admits a unique maximal subgroup of Kac type; in other words, every CQG algebra admits a maximal Kac type quotient. He also computed such Kac quotients in some explicit examples, including the universal unitary quantum groups $U_Q^+$ of Wang and Van Daele. The same paper also saw the first seeds of the study of residually finite dimensional CQG algebras, fully developed ten years later by Chirvasitu \cite{chirvasitu15}. The latter article shows that every CQG algebra admits the RFD quotient, which roughly speaking is the largest quotient which has `sufficiently many' finite dimensional representations, discusses various stability results for the RFD property and most importantly proves that the CQG algebras of free unitary and orthogonal quantum groups, $U_n^+$ and $O_n^+$ are RFD for all $n\neq 3$. The case of $n=3$ was established later in \cite{chirvasitu20}. 
One should note that already combining \cite{soltan05}, \cite{chirvasitu15} and \cite{chirvasitu20} leads to the description of the RFD quotient of the CQG algebras of all $U_Q^+$. We also refer to these papers and their introduction for further motivation behind studying these concepts.

In this short note we compute the Kac and RFD quotients for the Hopf$^*$-algebras associated to universal orthogonal quantum groups $O_F^+$ of Wang and Van Daele, exploiting earlier results of Chirvasitu, the classification of $O_F^+$ up to isomorphism essentially due to Banica and Wang (formulated explicitly in \cite{derijdt07}), and the direct computations using the defining commutation relations. The main results are Theorems \ref{thm-RFD-O-CaseI} and 
 \ref{thm-RFD-O-CaseII}.

\section{Preliminaries}\label{sec-prelim}
We begin by recalling the basic objects and notions studied in this paper.
\subsection{Universal compact quantum groups}

We will study compact quantum groups in the sense of \cite{woronowicz87} via the associated CQG (compact quantum group) algebras. These are involutive Hopf algebras which are spanned by the coefficients of their finite-dimensional unitary corepresentations, see, e.g., \cite[Section 11.3]{ks97}; each of them admits a unique bi-invariant state, called the Haar state. Note that Hopf$^*$-quotients of CQG algebras are again CQG algebras, and the category of CQG algebras admits a natural free product construction (see for example \cite{Wangfree}).

The universal compact quantum groups $U_Q^+$ and $O_F^+$ were introduced by Van Daele and Wang \cite{wang+vandaele96}. Let $N \in \mathbb{N}$, let $F\in M_N(\mathbb{C})$ be invertible, and put $Q=F^*F$. The universal unitary CQG algebra ${\rm Pol}(U_{Q}^+)$, also denoted $A_u(Q)$, is generated by the $N^2$ coefficients of its fundamental corepresentation $U=(u_{jk})_{1\le j,k\le N}$, subject to the conditions that $U$ and $FUF^{-1}$ are unitaries in $M_N\big({\rm Pol}(U_Q^+)\big)$. This means that for all $1\le j,k\le N$ we have 
\begin{gather*} 
\sum_{\ell=1}^N u_{j\ell}u^*_{k\ell} = \delta_{jk} 1 = \sum_{\ell=1}^N u^*_{\ell j} u_{\ell k}, \tag{U1}\\
\sum_{\ell,r,s=1}^N u_{\ell j} (F^*F)_{\ell r}u^*_{rs} (F^*F)^{-1}_{sk} = \delta_{jk} 1 = \sum_{\ell,r,s=1}^N (F^*F)_{jr}u^*_{rs}(F^*F)^{-1}_{r\ell} u_{k\ell}. \tag{U2}
\end{gather*}
Thus the CQG algebra ${\rm Pol}(U_{F^*F}^+)$ depends only on the positive invertible matrix $Q$, which, up to isomorphism, we can assume to be diagonal, $Q=(\delta_{jk} q_j)_{1\le j,k\le N}$, with $0<q_1\le q_2\le \cdots\le q_N$.

If $F$ satisfies furthermore $F\overline{F}\in\mathbb{R}I_N$, then we define the universal orthogonal CQG algebra ${\rm Pol}(O_F^+)$, also denoted by $B_u(F)$ or $A_o(F)$, as the quotient of ${\rm Pol}(U_Q^+)$ by the additional relation
\[
U=F\overline{U}F^{-1}. \tag{H}
\]
Up to isomorphism of CQG algebras, it is sufficient to consider the following two families, see \cite{wang02} and \cite[Remark 1.5.2]{derijdt07}.

\begin{description}
\item[Case I] 
$F\overline{F}=I_N$, and $F$ can be written as
\begin{equation}\label{eq-F-caseI}
F=\left(
\begin{array}{ccc}
0 & D & 0 \\
D^{-1} & 0 & 0 \\
0 & 0 & I_{N-2k}
\end{array}
\right)
\end{equation}
with
\[
D=\left(
\begin{array}{ccc}
q_1 & & \\
& \ddots & \\
& & q_k
\end{array}
\right)
\]
a diagonal matrix with coefficients $0<q_1\le q_2 \le \cdots \le q_k<1$.

\item[Case II]
$F\overline{F}=-I_N$, $N$ is even, and $F$ can be written as
\begin{equation}\label{eq-F-caseII}
F=\left(
\begin{array}{cc}
0 & D  \\
-D^{-1} & 0 
\end{array}
\right)
\end{equation}
with 
\[
D=\left(
\begin{array}{ccc}
q_1 & & \\
& \ddots & \\
& & q_{N/2}
\end{array}
\right)
\]
a diagonal matrix with coefficients $0<q_1\le q_2 \le \cdots \le q_{N/2}\le 1$.
\end{description}
Note that the eigenvalues of $Q=F^*F$ are given by
\begin{gather*} 
\text{case I:} \quad 0< q^2_1 \le \cdots \le q^2_k < 1 < q_k^{-2} \le \cdots \le q_1^{-2}, \\
\text{case II:} \quad 0< q^2_1 \le \cdots \le q^2_{N/2} 
\le 1 \le q_{N/2}^{-2} \le \cdots \le q_1^{-2},
\end{gather*}
where in case I, $1$ is an eigenvalue only if $2k<N$.

\subsection{Kac quotient and RFD quotient}

If $A={\rm Pol}(\mathbb{G})$ is the CQG algebra of some compact quantum group, then the Kac ideal of $A$ is defined as the intersection of the (left) null spaces of all tracial states on $A$:
\[
\mathcal{J}_{\rm KAC} = \{ a\in A; \tau(a^*a)=0 \text{ for all tracial states }\tau\text{ on }A\}, 
\]
and the Kac quotient is $A_{\rm KAC}=A/\mathcal{J}_{\rm KAC}$. One can show that $A_{\rm KAC}$ is again a CQG algebra, which corresponds to the largest quantum subgroup of $\mathbb{G}$ which is of Kac type; the last statement means that the associated Haar state is a trace.

So{\l}tan \cite[Appendix A]{soltan05} \cite[Section 5]{soltan06} worked with the Kac quotient for C$^*$-algebras associated with compact quantum groups, but here we prefer to use a version for CQG algebras, which is also the setting in \cite{chirvasitu15}. 
See Subsection \ref{subsec-CQGvsCStar} below for a brief discussion of the relation between CQG-algebraic and C$^*$-algebraic Kac or RFD quotients.

Motivated by a question about Bohr compactifications of discrete quantum groups, Chirvasitu  introduced in \cite{chirvasitu15} the RFD property (where RFD stands for `residually finite dimensional') for CQG algebras and showed that ${\rm Pol}(U_N^+)=A_u(I_N)$ and ${\rm Pol}(O_N^+)=B_u(I_N)=A_o(I_N)$ have this property, implying that the discrete quantum groups $\widehat{U_N^+}$ and $\widehat{O_N^+}$ are maximal almost periodic in the sense of \cite{soltan05,soltan06}. See also the related more recent paper \cite{bbcw}.

The RFD quotient is defined as the biggest quotient of a CQG algebra that has the RFD property. We recall the relevant definitions from \cite{chirvasitu15}.

\begin{defin}\cite[Definition 2.6]{chirvasitu15}
A *-algebra $A$ has property \emph{RFD}, if for any $a\in A$, $a\not=0$,  there exists a finite-dimensional representation (i.e.\ a unital $^*$-homomorphism) $\pi:A\to M_n(\mathbb{C})$ with $\pi(a)\not=0$.

The \emph{RFD quotient} $A_{\rm RFD}$ of a *-algebra $A$ is the quotient of $A$ by the intersection of the kernels of all representations $\pi:A\to M_n(\mathbb{C})$, with $n \in \mathbb{N}$.
\end{defin}

In other words,  $A_{\rm RFD}=A/\mathcal{J}_{\rm RFD}$ with
\[
\mathcal{J}_{\rm RFD}=\{a\in A; \forall \pi:A\to M_n(\mathbb{C}) \text{ a  representation}, \pi(a)=0\}.
\]

One can show that the RFD quotient of a CQG algebra is again a CQG algebra.

Note that RFD is a weaker property than inner linearity (defined in \cite{bb2010}, see also \cite{bfs2012}); in general the relationship between various possible notions of residual finiteness for quantum groups remains not fully clarified -- see for example the comments in \cite{bbcw}.

Chirvasitu proved the following three results.

\begin{prop}\label{prop1}\cite[Last sentence of Section 2.4]{chirvasitu15}
If a CQG algebra has property RFD, then it is of Kac type.
\end{prop}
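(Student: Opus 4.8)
The plan is to exploit the fact that every finite-dimensional $^*$-representation automatically produces a tracial state, and thereby to show that the Kac ideal of an RFD algebra is trivial. First I would observe that for any unital $^*$-homomorphism $\pi\colon A\to M_n(\mathbb{C})$ the composition $\tau_\pi := {\rm tr}_n\circ\pi$, where ${\rm tr}_n$ denotes the normalized trace on $M_n(\mathbb{C})$, is a \emph{tracial} state on $A$. Indeed, it is a state because $\pi$ is unital and positive while ${\rm tr}_n$ is a state, and it is tracial because ${\rm tr}_n\big(\pi(a)\pi(b)\big)={\rm tr}_n\big(\pi(b)\pi(a)\big)$ for all $a,b\in A$, by traciality of ${\rm tr}_n$ on the matrix algebra.

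Next I would use the RFD hypothesis to conclude that the Kac ideal $\mathcal{J}_{\rm KAC}$ is trivial. Let $a\in A$ with $a\neq0$. By the RFD property there is a finite-dimensional representation $\pi\colon A\to M_n(\mathbb{C})$ with $\pi(a)\neq0$. Since ${\rm tr}_n$ is faithful and $\pi(a)^*\pi(a)$ is a nonzero positive matrix, we obtain
\[
\tau_\pi(a^*a) = {\rm tr}_n\big(\pi(a)^*\pi(a)\big) > 0.
\]
Hence $a\notin\mathcal{J}_{\rm KAC}$; as $a$ was an arbitrary nonzero element, $\mathcal{J}_{\rm KAC}=\{0\}$.

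Finally, I would conclude by recalling that the Kac quotient $A_{\rm KAC}=A/\mathcal{J}_{\rm KAC}$ is a CQG algebra of Kac type, that is, its Haar state is a trace. Since $\mathcal{J}_{\rm KAC}=\{0\}$ forces $A=A_{\rm KAC}$, the algebra $A$ itself is of Kac type, which is the desired assertion.

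I do not expect a genuine obstacle here: the whole content is the elementary but decisive observation that every finite-dimensional $^*$-representation yields, through the canonical trace on a matrix algebra, a tracial state separating the given nonzero element. The only external input is the already-recalled fact (essentially due to So\l tan) that the Kac quotient is of Kac type, which handles the passage from the triviality of the Kac ideal to the traciality of the Haar state.
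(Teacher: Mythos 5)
Your argument is correct and is essentially the standard one behind the cited result: the normalized trace on $M_n(\mathbb{C})$ turns every finite-dimensional $^*$-representation into a tracial state, so RFD forces $\mathcal{J}_{\rm KAC}\subseteq\mathcal{J}_{\rm RFD}=\{0\}$, and then $A=A_{\rm KAC}$ is of Kac type by So\l tan's description of the Kac quotient. The paper itself only cites Chirvasitu here, but it records exactly the two ingredients you use (the inclusion $\mathcal{J}_{\rm KAC}\subseteq\mathcal{J}_{\rm RFD}$ and the fact that $A_{\rm KAC}$ is of Kac type), so your proof is the intended one.
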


\begin{prop}\label{prop-free-prod}\cite[Proposition 2.10]{chirvasitu15}
If two *-algebras $A$ and $B$ have property RFD, then their free product $A\star B$ also has property RFD.
\end{prop}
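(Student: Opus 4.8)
The plan is to use the universal property of the free product: a unital $^*$-representation of $A\star B$ on a finite-dimensional Hilbert space $\mathbb{C}^n$ is precisely a pair of unital $^*$-representations $A\to M_n(\mathbb{C})$ and $B\to M_n(\mathbb{C})$ into the \emph{same} matrix algebra. Thus, given a nonzero $x\in A\star B$, I want to produce a single pair of finite-dimensional representations whose induced representation of $A\star B$ does not annihilate $x$. Since only finitely many elements of $A$ and of $B$ occur in an expression of $x$ as a sum of alternating words, and since $A$ and $B$ are RFD, I would first choose finite-dimensional unital representations $\pi_A\colon A\to M_p(\mathbb{C})$ and $\pi_B\colon B\to M_q(\mathbb{C})$ that are injective on the (finite-dimensional) subspaces of $A$ and of $B$ spanned by the letters appearing in $x$; a finite direct sum of point-separating representations, which RFD provides, furnishes such $\pi_A$ and $\pi_B$.

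To place both algebras on a common space I would amplify, setting $n=pq$ and $\tilde\pi_A=\pi_A\otimes 1_q$, $\tilde\pi_B=1_p\otimes\pi_B$, two unital representations on $\mathbb{C}^n$ which unfortunately commute and so factor through $A\otimes B$. The freeness is then created by conjugation: for $u\in U(n)$ define $\pi_u\colon A\star B\to M_n(\mathbb{C})$ by $\pi_u|_A=\tilde\pi_A$ and $\pi_u|_B=u\,\tilde\pi_B(\cdot)\,u^*$. Each matrix entry of $\pi_u(x)$ is a polynomial in the entries of $u$ and of $u^*$, so $u\mapsto\pi_u(x)$ is a real-analytic $M_n(\mathbb{C})$-valued function on $U(n)$; it then suffices to show that this function does not vanish identically, since then $\pi_u(x)\neq 0$ for some (indeed generic) $u$, and such a $\pi_u$ witnesses RFD for $x$.

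The heart of the argument, and the step I expect to be the main obstacle, is this non-vanishing. I would argue by contradiction: if $\pi_u(x)=0$ for all $u\in U(n)$, then by Zariski density of $U(n)$ in $GL_n(\mathbb{C})$ the relation persists for the representations $g\mapsto\bigl(\tilde\pi_A,\ g\,\tilde\pi_B(\cdot)\,g^{-1}\bigr)$ for every invertible $g$. I would then let $g$ degenerate along a one-parameter family of diagonal matrices with widely separated scales and examine the leading asymptotics: the letters of $A$ and of $B$ are thereby put into general position, so that the word of maximal length occurring in $x$ contributes a term that cannot be cancelled by shorter words or by other words of the same length, using precisely the injectivity of $\pi_A$ and $\pi_B$ on the relevant letters. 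Making this isolation of the top-degree word rigorous -- choosing the scaling so that distinct reduced words land in distinguishable matrix components, and controlling the combinatorics of cancellation -- is the delicate part; once it is carried out, the resulting contradiction shows $\pi_u(x)\not\equiv 0$, and hence $A\star B$ is RFD. This is, in spirit, the strategy of Exel and Loring for C$^*$-algebras, adapted here to the purely algebraic setting.
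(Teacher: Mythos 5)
The paper itself does not prove this statement: it is quoted verbatim from \cite[Proposition 2.10]{chirvasitu15}, whose proof embeds the RFD $^*$-algebras $A$ and $B$ into RFD C$^*$-algebras and then invokes the Exel--Loring theorem that the full unital free product of RFD C$^*$-algebras is again RFD, a result which ultimately rests on Choi's theorem that $C^*(F_k)$ is residually finite dimensional. Your opening moves --- reducing to finitely many letters, building $\pi_A$ and $\pi_B$ injective on the relevant finite-dimensional subspaces, amplifying to a common space and conjugating one copy by a unitary to create freeness --- are sound and are indeed the starting point of the Exel--Loring circle of ideas.

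The gap is exactly at the step you flag as the heart of the argument, and it is not merely delicate: the non-vanishing claim is \emph{false} at the fixed dimension $n=pq$. Every representation of $A\star B$ on $\mathbb{C}^{n}$ takes values in $M_{n}(\mathbb{C})$, which by the Amitsur--Levitzki theorem satisfies the standard polynomial identity $s_{2n}(x_1,\dots,x_{2n})=\sum_{\sigma\in S_{2n}}\mathrm{sgn}(\sigma)\,x_{\sigma(1)}\cdots x_{\sigma(2n)}=0$. Hence if $x=s_{2n}(a_1,\dots,a_{2n})$ is a nonzero element of $A\star B$ of this form, then $\pi_u(x)=0$ for \emph{every} $u\in U(n)$, so your analytic function vanishes identically and no degeneration of $u$ can save it. Such $x$ exist whenever $A\star B$ satisfies no polynomial identity: take $A=\mathbb{C}\mathbb{Z}_2$ and $B=\mathbb{C}\mathbb{Z}_3$, which are RFD with faithful representations already on $\mathbb{C}^2$ and $\mathbb{C}^3$ (so $p=2$, $q=3$, $n=6$ meets your letter-injectivity requirement); then $A\star B=\mathbb{C}[\mathbb{Z}_2\ast\mathbb{Z}_3]$ contains $\mathbb{C}[F_2]$, which satisfies no polynomial identity, so $s_{12}$ takes a nonzero value on it while every $\pi_u$ on $\mathbb{C}^6$ kills that value. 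The conclusion is that the dimension of a detecting representation must grow with $x$ in a way that is not controlled by injectivity on the letters, and the proposal offers no mechanism for this growth; for a given $x$ it is not even clear that \emph{some} choice of $p,q$ makes the family $\bigl(\pi_A\otimes 1,\ u(1\otimes\pi_B)u^*\bigr)$ detect $x$ --- proving that is essentially the theorem itself. The standard ways to supply the missing mechanism are Choi's theorem via Exel--Loring (as in the cited source) or asymptotic freeness of independent Haar unitaries as the matrix size tends to infinity; either way the detecting dimensions are unbounded, which is a genuinely different argument from varying $u$ inside a fixed $U(pq)$.
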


\begin{thm}\label{thm-chir}\cite[Theorem 3.1]{chirvasitu15}, \cite[Theorem 2.4]{chirvasitu20}
The CQG algebras ${\rm Pol}(U_N^+)$ and ${\rm Pol}(O_N^+)$ have property RFD for $N\ge 2$.
\end{thm}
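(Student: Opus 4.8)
The plan is to handle the unitary case by reduction to the orthogonal one, and to attack the orthogonal case by approximating the (faithful) Haar state with states coming from finite-dimensional representations.

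For the reduction I would use Banica's free complexification: the assignment $u_{jk}\mapsto z\otimes v_{jk}$, where $z$ generates $\mathbb{C}[\mathbb{Z}]={\rm Pol}(\mathbb{T})$ and $(v_{jk})$ is the fundamental corepresentation of ${\rm Pol}(O_N^+)$, defines an injective Hopf$^*$-algebra morphism ${\rm Pol}(U_N^+)\hookrightarrow \mathbb{C}[\mathbb{Z}]\star {\rm Pol}(O_N^+)$, since this identifies $U_N^+$ with the free complexification of $O_N^+$. Property RFD passes to $^*$-subalgebras (restrict to the subalgebra any separating finite-dimensional representation), and $\mathbb{C}[\mathbb{Z}]$ is commutative, hence RFD. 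So once ${\rm Pol}(O_N^+)$ is known to be RFD, Proposition \ref{prop-free-prod} makes the free product RFD, and the embedding yields RFD for ${\rm Pol}(U_N^+)$. Thus everything reduces to the orthogonal case.

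For ${\rm Pol}(O_N^+)$ I would argue that its finite-dimensional representations separate points. A representation $\pi$ into $M_d(\mathbb{C})$ is the same as a choice of self-adjoint matrices $A_{jk}=\pi(u_{jk})$ such that $V=(A_{jk})\in M_N\big(M_d(\mathbb{C})\big)$ is orthogonal; these are parametrized by a nonempty compact real-algebraic set $\mathcal{V}_{N,d}$. Since the Haar state $h$ is faithful on the CQG algebra, it suffices to exhibit states, each factoring through finite-dimensional representations, that converge weakly to $h$: if $h(a^*a)>0$ for some $a\neq 0$, then one of these states is strictly positive on $a^*a$, forcing $\pi(a)\neq 0$ for some $\pi$. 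Concretely I would form the averaged states $\Phi_d(a)=\int_{\mathcal{V}_{N,d}} \tfrac1d\operatorname{tr}_d\big(\pi_V(a)\big)\,d\mu_d(V)$ for a natural invariant probability measure $\mu_d$, and try to show $\Phi_d\to h$ as $d\to\infty$ by matching moments: the values $h(u_{i_1j_1}\cdots u_{i_kj_k})$ are governed by the Temperley--Lieb/Weingarten combinatorics, and a Weingarten computation for the random model should reproduce them in the limit.

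The main obstacle is exactly this moment convergence: one must produce genuine finite-dimensional representations satisfying (U1) and (H) \emph{exactly} while having joint moments asymptotically equal to the Temperley--Lieb values, and then control the Weingarten asymptotics. This control is sensitive to the size of $N$: the generic construction covers $N\neq 3$ (with the low-dimensional case $N=2$ amenable to direct analysis), whereas $N=3$ is genuinely exceptional and requires the separate, more delicate argument of \cite{chirvasitu20}.
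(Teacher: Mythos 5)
First, note that the paper does not prove Theorem \ref{thm-chir} at all: it is imported verbatim from \cite[Theorem 3.1]{chirvasitu15} and \cite[Theorem 2.4]{chirvasitu20}, so the comparison has to be with the proofs in those references. Your reduction of the unitary case to the orthogonal one is correct: Banica's free complexification embedding ${\rm Pol}(U_N^+)\hookrightarrow\mathbb{C}\mathbb{Z}\star{\rm Pol}(O_N^+)$, $u_{jk}\mapsto z v_{jk}$, is exactly \cite[Th\'eor\`eme 1 (iv)]{banica97} (used in this paper in the proof of Proposition \ref{prop-RFD-J}), RFD is inherited by unital $^*$-subalgebras, $\mathbb{C}\mathbb{Z}$ is RFD, and Proposition \ref{prop-free-prod} handles the free product. (Chirvasitu argues in the opposite direction --- he establishes ${\rm Pol}(U_N^+)$ first and transfers to ${\rm Pol}(O_N^+)$ via the cocenter theorem \cite[Theorem 3.6]{chirvasitu15}, as reproduced in Steps 1--3 of Proposition \ref{prop-RFD-J} --- but your direction is equally valid.)

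The genuine gap is the orthogonal case, which after your reduction carries the entire weight of the theorem. You propose to approximate the Haar state $h$ pointwise by states $\Phi_d$ averaged over the variety $\mathcal{V}_{N,d}$ of exact $d$-dimensional representations, and you yourself flag the convergence $\Phi_d\to h$ as ``the main obstacle''; but this convergence is not a technical detail to be checked --- it is a statement at least as strong as the theorem itself, and nothing in your sketch makes it plausible. You would need \emph{exact} (not approximate) finite-dimensional matrix models whose joint moments converge to the Temperley--Lieb values; Weingarten calculus computes the Haar moments but gives no control over the moments of whatever ``natural invariant measure'' one puts on $\mathcal{V}_{N,d}$, and no such exact matrix model is known. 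Such an approximation of the Haar trace by finite-dimensionally factoring traces is essentially Kirchberg's factorization property for $\widehat{O_N^+}$, which in \cite{bbcw} is derived \emph{from} RFD rather than used to prove it; and the fact that $C_r(O_N^+)$ admits no finite-dimensional representations at all for $N\ge 3$ (Example \ref{exa-cstar}) indicates how delicate any such approximation must be. The actual proofs proceed quite differently: by showing that the free quantum group is topologically generated by quantum subgroups already known to be RFD (a smaller free quantum group together with a classical group), using that RFD passes to topologically generated joins, and inducting on $N$; the case $N=3$ is exceptional because the relevant topological generation statement resisted the methods of \cite{chirvasitu15} and required the separate argument of \cite{chirvasitu20}. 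Your explanation of why $N=3$ is special has no content in your framework, since you never specify a construction whose success could depend on $N$.
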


\begin{remark} \label{remark:com}
For $N=1$ we have ${\rm Pol}(U_1^+)=\mathbb{C}\mathbb{Z}$ and ${\rm Pol}(O_1^+)=\mathbb{C}\mathbb{Z}_2$, so property RFD also holds for $N=1$, cf.\ \cite[Remark 3.2]{chirvasitu15}. (More generally any commutative *-algebra that embeds into some C$^*$-algebra has RFD, cf.\ \cite[Remark 2.7]{chirvasitu15}).

The proofs in \cite{chirvasitu15} do not include $N=3$; this case is dealt with in \cite{chirvasitu20}.
\end{remark}

The quotient CQG-algebras $A_{\rm RFD}$ and $A_{\rm KAC}$ yield quantum subgroups $\mathbb{G}_{\rm RFD}$ and $\mathbb{G}_{\rm KAC}$ of $\mathbb{G}$. 
Since $\mathcal{J}_{\rm KAC}\subseteq \mathcal{J}_{\rm RFD}$, we have $\mathbb{G}_{\rm RFD}\subseteq \mathbb{G}_{\rm KAC}$, i.e.\  $A_{\rm RFD}$ is a quotient of $A_{\rm KAC}$.

\subsection{RFD quotient of universal unitary quantum groups}

The Kac quotients and the RFD quotients for the universal unitary quantum groups are already known, although the latter result has not been explicitly stated in the literature.
\begin{thm}\label{thm-RFD-U}
\cite{soltan05,chirvasitu15,chirvasitu20}
Let $Q\in M_d(\mathbb{C})$ be an invertible positive matrix with $r$ distinct eigenvalues $q_1,\ldots,q_r$, which have multiplicities $M_1,\ldots,M_r$. 

Then the Kac quotient and the RFD quotient of the CQG algebra ${\rm Pol}(U_Q^+)$ are equal to the free product
$\bigstar_{\nu=1}^r {\rm Pol}(U_{M_\nu}^+)$.
\end{thm}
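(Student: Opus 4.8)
The plan is to identify $A_{\rm KAC}$ with the free product $B := \bigstar_{\nu=1}^r {\rm Pol}(U_{M_\nu}^+)$ directly, and then squeeze $A_{\rm RFD}$ between $A_{\rm KAC}$ and $B$. After using the freedom to assume $Q=(\delta_{jk}q_j)$ diagonal (as recalled in Section~\ref{sec-prelim} for $U_Q^+$), with the index set partitioned into blocks $I_1,\dots,I_r$ according to the distinct eigenvalues, I would first construct a surjective Hopf$^*$-algebra morphism $\pi\colon {\rm Pol}(U_Q^+)\to B$. On generators I set $\pi(u_{jk})$ equal to the $(j,k)$-entry of the fundamental corepresentation of the $\nu$-th free factor when $j,k$ both lie in the block $I_\nu$, and $\pi(u_{jk})=0$ when $j,k$ lie in different blocks. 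To see that $\pi$ respects (U1)--(U2), note that the image of $U$ is then block-diagonal; a bilinear term such as $u_{\ell j}u^*_{\ell k}$ survives only when $j,k,\ell$ share a block $I_\nu$, on which $Q$ is the scalar $q_\nu I_{M_\nu}$, so every factor $q_\ell q_k^{-1}$ occurring in (U2) equals $1$ and both (U1) and (U2) collapse to the defining unitarity relations of ${\rm Pol}(U_{M_\nu}^+)$ (recall $A_u(q_\nu I_{M_\nu})=A_u(I_{M_\nu})$). By Theorem~\ref{thm-chir} together with Remark~\ref{remark:com} each factor ${\rm Pol}(U_{M_\nu}^+)$ is RFD, hence by Proposition~\ref{prop-free-prod} so is $B$; in particular $B$ is of Kac type by Proposition~\ref{prop1}. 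Consequently $\pi$ factors as ${\rm Pol}(U_Q^+)\twoheadrightarrow A_{\rm KAC}\xrightarrow{\bar\pi}B$, and, $B$ being RFD, also through $A_{\rm RFD}$.

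For the reverse direction I would pass through the square of the antipode. The relations (U1)--(U2) yield $S^2(u_{jk})=q_jq_k^{-1}u_{jk}$, so the scalar $q_jq_k^{-1}$ equals $1$ precisely when $j,k$ lie in the same eigenblock. Since a CQG algebra is of Kac type exactly when its Haar state is tracial, equivalently when $S^2=\mathrm{id}$, in $A_{\rm KAC}$ we get $\bar u_{jk}=S^2(\bar u_{jk})=q_jq_k^{-1}\bar u_{jk}$, forcing $\bar u_{jk}=0$ whenever $j,k$ lie in different blocks. Thus the image of $U$ in $A_{\rm KAC}$ is block-diagonal, each diagonal block $(\bar u_{jk})_{j,k\in I_\nu}$ being unitary and, by the same collapse of (U2) as above, satisfying the relations defining ${\rm Pol}(U_{M_\nu}^+)$. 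The universal property of the free product then produces a $^*$-homomorphism $\phi\colon B\to A_{\rm KAC}$ sending each free factor onto the corresponding block, and $\phi$ is surjective because the vanishing of the off-block entries leaves the remaining ones generating $A_{\rm KAC}$.

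It then remains to check that $\bar\pi$ and $\phi$ are mutually inverse, which is purely formal: the composite $\bar\pi\circ\phi\colon B\to B$ is the identity on the canonical generators of each free factor, so $\bar\pi\circ\phi=\mathrm{id}_B$, whence $\phi$ is injective and thus an isomorphism $B\cong A_{\rm KAC}$. Finally, the surjections $A_{\rm KAC}\twoheadrightarrow A_{\rm RFD}\twoheadrightarrow B$ compose to the isomorphism $\bar\pi=\phi^{-1}$, forcing both to be isomorphisms, so $A_{\rm KAC}\cong A_{\rm RFD}\cong B$, as claimed. The main obstacle I anticipate is the antipode computation: establishing $S^2(u_{jk})=q_jq_k^{-1}u_{jk}$ from (U1)--(U2) and invoking the equivalence between Kac type (defined here via tracial states) and $S^2=\mathrm{id}$, which is exactly where the specific modular structure of $U_Q^+$ enters. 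Once the off-block generators are known to die in $A_{\rm KAC}$, the freeness of the surviving blocks is handled cleanly by the universal property rather than by any direct computation.
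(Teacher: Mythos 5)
Your proof is correct. Note first that the paper itself does not prove this statement from scratch: its ``proof'' is the remark that follows, namely (i) So{\l}tan already computed the Kac quotient of ${\rm Pol}(U_Q^+)$ to be $\bigstar_{\nu=1}^r{\rm Pol}(U_{M_\nu}^+)$, and (ii) since this free product is RFD by Proposition \ref{prop-free-prod} and Theorem \ref{thm-chir}, and the RFD quotient always sits between the algebra and its Kac quotient, the two quotients coincide. Your second half --- squeezing $A_{\rm RFD}$ between $A_{\rm KAC}$ and $B$ via the surjection $\pi$ and the RFD property of $B$ --- is exactly step (ii). Where you genuinely differ is that you replace the citation of So{\l}tan by a direct argument: you kill the off-block generators in $A_{\rm KAC}$ using $S^2(u_{jk})=q_jq_k^{-1}u_{jk}$ together with the equivalence ``Haar state tracial $\Leftrightarrow$ $S^2=\mathrm{id}$'', and then identify the surviving block-diagonal quotient with the free product by the universal properties of ${\rm Pol}(U_{M_\nu}^+)$ and of the coproduct of unital $^*$-algebras. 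This is essentially So{\l}tan's own mechanism, so the gain is self-containedness rather than novelty; the two small points you should make explicit are that the quotient map onto $A_{\rm KAC}$ intertwines the antipodes (i.e.\ $\mathcal{J}_{\rm KAC}$ is a Hopf $^*$-ideal, which is part of the assertion that $A_{\rm KAC}$ is again a CQG algebra), and that the factorization of $\pi$ through $A_{\rm KAC}$ uses the faithfulness of the (tracial) Haar state of $B$ on the CQG algebra $B$, so that $h_B\circ\pi$ is a tracial state annihilating no element outside $\ker\pi$. With those two standard facts supplied, the argument is complete and yields both identifications at once.
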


\begin{remark}
So{\l}tan showed that this is the Kac quotient, cf\ \cite[Theorem 4.9]{soltan05} and \cite[Section 7]{soltan06}.
Chirvasitu's results, i.e., Proposition \ref{prop-free-prod} and Theorem \ref{thm-chir}, show that this free product is RFD, and therefore it is also the RFD quotient.
\end{remark}

\subsection{CQG-algebraic quotients vs.\ C$^*$-algebraic quotients}\label{subsec-CQGvsCStar}

Let $\mathbb{G}=(\mathsf{A},\Delta)$ be a compact quantum group with C$^*$-algebra $\mathsf{A}$ and CQG algebra $\mathcal{A}$. The C$^*$-algebraic Kac ideal and RFD ideal are
\[
\mathsf{J}_{\rm KAC} = \{ a\in \mathsf{A}; \tau(a^*a)=0 \text{ for all tracial states }\tau\text{ on }\mathsf{A}\},
\]
with $\mathsf{J}_{\rm KAC}=\mathsf{A}$ if $\mathsf{A}$ has no tracial states, and \[
\mathsf{J}_{\rm RFD} = \{ a\in \mathsf{A}; \pi(a)=0 \text{ for all fin.-dim.\ repr.\ }\pi\text{ of }\mathsf{A}\},
\]
with $\mathsf{J}_{\rm RFD}=\mathsf{A}$ if $\mathsf{A}$ has no finite-dimensional representations.

Again we can define $\mathsf{A}_{\rm KAC}$ and $\mathsf{A}_{\rm RFD}$ as respective quotients of $\mathsf{A}$ by $\mathsf{J}_{\rm KAC}$ and $\mathsf{J}_{\rm RFD}$, and again the RFD quotient is a quotient of the Kac quotient.

Since we can restrict tracial states or finite-dimensional representations of $\mathsf{A}$ to $\mathcal{A}$, we have
\[
\mathcal{J}_{\rm KAC} \subseteq \mathsf{J}_{\rm KAC}\cap\mathcal{A}
\quad\text{ and }\quad
\mathcal{J}_{\rm RFD} \subseteq \mathsf{J}_{\rm RFD}\cap\mathcal{A}.
\]
In general this inclusion can be proper. 
If $\mathsf{A}=C_u(\mathbb{G})$ is the universal C$^*$-algebra of $\mathbb{G}$, then we have equality, since every state and representation on $\mathcal{A}$ extends to $C_u(\mathbb{G})$.

\begin{example}\label{exa-cstar}
\cite[Proposition 2.4]{cs2019} showed that a compact quantum group is coamenable if and only its reduced C$^*$-algebra admits a finite-dimensional representation. Therefore, using the results of Banica from \cite{banica96}, \cite{banica97} and \cite{banica99} 
 we have $C_r(U_Q^+)_{\rm RFD}=\{0\}$ for $N\ge 2$, and $C_r(O_F^+)_{\rm RFD}=\{0\}$ for $N\ge 3$.

Banica \cite[Theorem 3]{banica97} showed also that the reduced C$^*$ algebra of $U_Q^+$ admits a unique trace if $Q\in\mathbb{R}I_N$, and no trace if this is not the case.
Thus we get
$C_r(U_N^+)_{\rm KAC}=C_r(U_N^+)$ and $C_r(U_Q^+)_{\rm KAC}=\{0\}$ if $Q\not\in \mathbb{R}I$ for $N\ge 2$. Similarly, if $N\ge 3$ and $\|F\|^8\le \frac{3}{4}{\rm Tr}FF^*$, then $C_r(O_F^+)$ has a unique trace if $F^*F=I$, and no trace if $F^*F\not\in \mathbb{R}I_N$, see \cite[Theorem 7.2]{vv07}. Therefore we get in this case $C_r(O_N^+)_{\rm KAC}=C_r(O_N^+)$ and $C_r(O_F^+)_{\rm KAC}=\{0\}$ if $F^*F\not\in \mathbb{R}I_N$.
\end{example}

\section{RFD quotient of the universal orthogonal quantum groups}\label{sec-RFD}

Let us now describe the RFD quotients of the free orthogonal quantum groups $O_F^+$ introduced in the beginning of the last section.

\subsection{Two special cases}\label{subsec-caseI}

Let us start with some special cases which will be useful in the next section when we treat the general situation.

\begin{prop}\label{prop-RFD-J}
Let $M\ge 1$ and let $J_M$ be the standard symplectic matrix
\[
J_M =
\left(
\begin{array}{cc}
0 & I_M \\
- I_M & 0
\end{array}
\right)
\]
Then the CQG algebra ${\rm Pol}(O_{J_M}^+)$ has property RFD.
\end{prop}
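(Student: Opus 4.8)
The plan is to reduce the defining relations to a manageable form, identify the finite-dimensional representations explicitly, and then establish that they separate the points of $\mathrm{Pol}(O_{J_M}^+)$.

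First I would exploit that $J_M$ is unitary. Since $Q=J_M^*J_M=I_{2M}$, the relation (U2) becomes a consequence of (U1), and $\mathrm{Pol}(O_{J_M}^+)$ is simply the universal unital $^*$-algebra generated by the entries of a unitary $u=(u_{ij})_{1\le i,j\le 2M}$ subject to $u=J_M\overline{u}\,J_M^{-1}$, where $\overline{u}=(u_{ij}^*)_{ij}$. Writing $u$ in $M\times M$ blocks and using $J_M^{-1}=-J_M$, this relation forces the \emph{quaternionic} shape
\[
u=\begin{pmatrix} a & b \\ -\overline{b} & \overline{a}\end{pmatrix},
\]
so that $\mathrm{Pol}(O_{J_M}^+)$ is the algebra of a ``free symplectic quantum group''. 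In particular $Q=I_{2M}$ shows the Haar state is tracial, i.e.\ $O_{J_M}^+$ is of Kac type, consistent with the target property by Proposition \ref{prop1}. Note this also shows $O_{J_M}^+$ is \emph{not} one of the $O_N^+$ of Theorem \ref{thm-chir}: those belong to Case I, while $J_M$ is a Case II matrix.

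Second, I would describe the finite-dimensional representations. A unital $^*$-homomorphism $\pi\colon\mathrm{Pol}(O_{J_M}^+)\to M_n(\mathbb{C})$ is the same datum as a unitary $V=(\pi(u_{ij}))\in M_{2M}(M_n(\mathbb{C}))\cong M_{2Mn}(\mathbb{C})$ satisfying $V=(J_M\otimes I_n)\overline{V}(J_M\otimes I_n)^{-1}$. Setting $J:=J_M\otimes I_n$ one checks that $J$ is real, unitary and $J^2=-I$, so that $\theta(\xi):=J\overline{\xi}$ is a quaternionic structure on $\mathbb{C}^{2Mn}$, and the displayed relation says exactly that $V$ commutes with $\theta$. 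Hence the finite-dimensional representations of $\mathrm{Pol}(O_{J_M}^+)$ are precisely the elements of the compact symplectic groups $\operatorname{Sp}(Mn)$, $n\ge 1$, and property RFD is equivalent to the statement that these symplectic matrices separate the points of $\mathrm{Pol}(O_{J_M}^+)$.

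The hard part is this separation, and it is where I expect the real work to lie. The situation runs exactly parallel to Chirvasitu's theorem that $\mathrm{Pol}(O_N^+)$ is RFD (Theorem \ref{thm-chir}): there the finite-dimensional representations are the \emph{real orthogonal} matrices fixed by the real structure $\xi\mapsto\overline{\xi}$, while here they are the \emph{symplectic} matrices fixed by the quaternionic structure $\theta$. I would therefore aim to transport his construction of a separating family of finite-dimensional representations to the quaternionic setting, replacing the compact real form by the compact symplectic one throughout; alternatively one may try to produce a faithful family directly, e.g.\ by constructing an injective unital $^*$-homomorphism of $\mathrm{Pol}(O_{J_M}^+)$ into a $^*$-algebra already known to be RFD and invoking that $^*$-subalgebras of RFD $^*$-algebras are again RFD. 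As a consistency check and base case, $M=1$ is immediate: there $F=J_1$ yields $O_{J_1}^+\cong SU(2)$ with commutative CQG algebra $\mathrm{Pol}(SU(2))$, which is RFD by Remark \ref{remark:com}; for $M\ge 2$ the algebra is genuinely noncommutative and the separation argument above is required.
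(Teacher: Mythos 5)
Your reduction of the relations is correct: since $J_M^*J_M=I_{2M}$ the algebra is of Kac type, the relation (H) forces the quaternionic block shape you display, and a finite-dimensional representation is exactly a unitary $V\in M_{2Mn}(\mathbb{C})$ commuting with the quaternionic structure $J_M\otimes I_n$ composed with conjugation, i.e.\ an element of a compact symplectic group. But this is only the setup; the statement to be proved is precisely that these symplectic matrices separate points, and at that moment your argument stops. ``I would aim to transport Chirvasitu's construction to the quaternionic setting'' is a declaration of intent, not a proof, and the alternative you float (embed $\mathrm{Pol}(O_{J_M}^+)$ into something known to be RFD) is not carried out either. So there is a genuine gap exactly at the step you yourself flag as the hard part.

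For comparison, the paper closes this gap by a three-step reduction to the known RFD property of $\mathrm{Pol}(U_{2M}^+)$, mimicking Chirvasitu's Section~3 treatment of $O_N^+$. First, Banica's embedding of $\mathrm{Pol}(U_{2M}^+)$ into $\mathbb{C}\mathbb{Z}\,\bigstar\,\mathrm{Pol}(O_{J_M}^+)$ given by $u_{jk}\mapsto z\,u_{jk}$ restricts to an isomorphism between the unital $^*$-subalgebras $A'\subseteq\mathrm{Pol}(U_{2M}^+)$ and $B'\subseteq\mathrm{Pol}(O_{J_M}^+)$ generated by the degree-zero words $u_{jk}^*u_{\ell m}$. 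Second, $B'$ is identified as the Hopf kernel (cocenter) of the central morphism $\gamma:\mathrm{Pol}(O_{J_M}^+)\to\mathbb{C}\mathbb{Z}_2$, $\gamma(u_{jk})=\delta_{jk}t$. Third, \cite[Theorem 3.6]{chirvasitu15} then yields that $\mathrm{Pol}(O_{J_M}^+)$ is RFD if and only if $\mathrm{Pol}(U_{2M}^+)$ is, and the latter is Theorem \ref{thm-chir}. Note in particular that merely exhibiting an RFD $^*$-subalgebra $B'\subseteq\mathrm{Pol}(O_{J_M}^+)$ would not suffice (RFD passes \emph{to} subalgebras, not \emph{from} them); the cocentral exact sequence over the finite group $\mathbb{Z}_2$ is what allows the property to be lifted from $B'$ to the whole algebra. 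If you want to complete your proof along your own lines, this center/cocenter mechanism is the missing ingredient you need to supply.
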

\begin{proof}
For $M=1$, we have $O_{J_1}^+=SU(2)$ and the result is true (as the algebra in question is commutative, see Remark \ref{remark:com}).

For the general case we can use the same proof as in \cite[Section 3]{chirvasitu15}.

\noindent{}
\emph{Step 1:}
The natural analog of \cite[Proposition 3.3]{chirvasitu15} holds.
Denote by $A'$ the unital *-subalgebra of $A={\rm Pol}(U^+_{2M})$ generated by $u_{jk}^* u_{\ell m}$, $1\le j,k,\ell,m\le 2M$ and by $B'$ the unital *-subalgebra of $B={\rm Pol}(O_{J_M}^+)$ generated by $u^*_{jk} u_{\ell m}$, $1\le j,k,\ell,m\le 2M$. Then there exists a unique CQG algebra isomorphism $A'\cong B'$ such that $A'\ni u^*_{jk}u_{\ell m}\mapsto u^*_{jk} u_{\ell m}\in B'$.

This isomorphism is simply the restriction to $A'$ of the embedding of ${\rm Pol}(U_{2M}^+)$ into $\mathbb{C}\mathbb{Z}\bigstar{\rm Pol}(O_{J_M}^+)$ defined in \cite[Th\'eor\`eme 1 (iv)]{banica97} by
\[
u_{jk} \mapsto z u_{jk}, \;\;\; j, k =1, \ldots, 2M,
\]
where $z$ denotes the generator of $\mathbb{Z}$ viewed as an element of $\mathbb{C}\mathbb{Z}$.

\noindent{}
\emph{Step 2:}
The center of ${\rm Pol}(O_{J_M}^+)$ is given by the morphism of CQG algebra $\gamma:{\rm Pol}(O_{J_M}^+)\to\mathbb{C}\mathbb{Z}_2$ with $\gamma(u_{jk})= \delta_{jk} t$ (where $t$ denotes the generator of $\mathbb{Z}_2$). The cocenter (i.e.\ the Hopf kernel of $\gamma$,  see \cite[Definition 2.10]{chirvasitu14})
is exactly $B'$. Indeed, $\gamma$ is central, i.e., it satisfies
\[
(\gamma \otimes {\rm id}) \Delta = (\gamma\otimes {\rm id})\circ \Sigma \circ \Delta: {\rm Pol}(O_{J_M}^+) \to  \mathbb{C}\mathbb{Z}_2 \otimes {\rm Pol}(O_{J_M}^+)
\]
where $\Sigma$ denotes the flip, and any other central map can be factored through $\gamma$. Furthermore, we have
\[
B' = {\rm Hker }(\gamma) = \{b\in {\rm Pol}(O_{J_M}^+): (\gamma \otimes {\rm id}) \Delta(b)=1\otimes b\}.
\]
\noindent{}
\emph{Step 3:}
We can therefore apply \cite[Theorem 3.6]{chirvasitu15} to prove an analogue of \cite[Proposition 3.8]{chirvasitu15}: ${\rm Pol}(O_{J_M}^+)$ is RFD if and only if ${\rm Pol}(U^+_{2M})$ is, and deduce from Theorem \ref{thm-RFD-U} above that ${\rm Pol}(O_{J_M}^+)$  indeed has property RFD.
\end{proof}

Let us consider next the case where $F^*F$ has only two eigenvalues: $q^2<1< q^{-2}$.

\begin{prop}\label{prop-one-block}
Let $M\in\mathbb{N}$, $q\in (0,1)$, $\epsilon\in\{-1,1\}$ and set
\[
F=
\left(
\begin{array}{cc}
0 & q I_M \\
\epsilon q^{-1} I_M & 0 
\end{array}
\right)
\]
Then the RFD quotient and the Kac quotient of the CQG algebra ${\rm Pol}(O_F^+)$ are both equal to the CQG algebra ${\rm Pol}(U_M^+)$. 
\end{prop}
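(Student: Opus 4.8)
The plan is to realise $\mathrm{Pol}(U_M^+)$ as an explicit quotient of $\mathrm{Pol}(O_F^+)$ and then show, by a short trace computation that crucially uses $q\neq 1$, that no larger Kac-type quotient is possible; the RFD quotient will follow formally.

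First I would decompose the fundamental corepresentation into $M\times M$ blocks, $U=\left(\begin{smallmatrix} A & B \\ C & D\end{smallmatrix}\right)$, and unwind the defining relation (H), $U=F\overline U F^{-1}$. With the given $F$ one has $F^{-1}=\left(\begin{smallmatrix} 0 & \epsilon qI_M \\ q^{-1}I_M & 0\end{smallmatrix}\right)$, and a direct computation gives
\[
F\overline U F^{-1}=\begin{pmatrix}\overline D & \epsilon q^2\overline C \\ \epsilon q^{-2}\overline B & \overline A\end{pmatrix},
\]
so that (H) is equivalent to $D=\overline A$ and $C=\epsilon q^{-2}\overline B$. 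I would then define a map $\rho\colon \mathrm{Pol}(O_F^+)\to\mathrm{Pol}(U_M^+)$ on the fundamental corepresentation by $A\mapsto V$, $B,C\mapsto 0$, $D\mapsto\overline V$, where $V$ is the fundamental corepresentation of $U_M^+$. Since $\mathrm{diag}(V,\overline V)$ is unitary (in $\mathrm{Pol}(U_M^+)=A_u(I_M)$ both $V$ and $\overline V$ are unitary) and manifestly satisfies (H), this extends to a surjective CQG-algebra morphism. Checking that setting $B=C=0$ and $D=\overline A$ collapses all the unitarity relations to ``$A$ and $\overline A$ are unitary'', one sees that $\ker\rho$ is exactly the $*$-ideal generated by the entries of $B$ and $C$, and that $\mathrm{Pol}(O_F^+)/\ker\rho\cong\mathrm{Pol}(U_M^+)$.

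The crux is the following trace estimate. Let $\tau$ be any tracial state on $\mathrm{Pol}(O_F^+)$ and write $\mathrm{Tr}_\tau(X)=\sum_j\tau(X_{jj})$ for an $M\times M$ block $X$. Applying $\mathrm{Tr}_\tau$ to the two diagonal unitarity relations $A^*A+C^*C=I_M$ and $AA^*+BB^*=I_M$, using traciality (so that $\mathrm{Tr}_\tau(A^*A)=\mathrm{Tr}_\tau(AA^*)$ and $\sum_{j,k}\tau(b_{jk}b_{jk}^*)=\sum_{j,k}\tau(b_{jk}^*b_{jk})$) together with the relation $C=\epsilon q^{-2}\overline B$ (which yields $\mathrm{Tr}_\tau(C^*C)=q^{-4}\sum_{j,k}\tau(b_{jk}b_{jk}^*)$), I would obtain
\[
\alpha+q^{-4}\beta=M=\alpha+\beta,\qquad \alpha:=\mathrm{Tr}_\tau(A^*A),\quad \beta:=\sum_{j,k}\tau(b_{jk}^*b_{jk}).
\]
As $q\in(0,1)$ forces $q^{-4}\neq 1$, this gives $\beta=0$, hence $\tau(b_{jk}^*b_{jk})=0$ for every $j,k$. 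Thus every entry of $B$ (and so, by (H), of $C$) lies in the left null space of each tracial state, i.e.\ in the Kac ideal $\mathcal J_{\rm KAC}$; since $\mathcal J_{\rm KAC}$ is a two-sided $*$-ideal this yields $\ker\rho\subseteq\mathcal J_{\rm KAC}$.

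Finally I would assemble the conclusion. The inclusion $\ker\rho\subseteq\mathcal J_{\rm KAC}$ says that $\mathrm{Pol}(O_F^+)_{\rm KAC}$ is a quotient of $\mathrm{Pol}(U_M^+)$. Conversely, the Haar state $h$ of $U_M^+$ is a faithful trace, so $h\circ\rho$ is a tracial state on $\mathrm{Pol}(O_F^+)$ whose null space is exactly $\ker\rho$; hence $\mathcal J_{\rm KAC}\subseteq\ker\rho$, and therefore $\mathrm{Pol}(O_F^+)_{\rm KAC}=\mathrm{Pol}(U_M^+)$. Because this Kac quotient is itself RFD by Theorem \ref{thm-chir} and $\mathcal J_{\rm KAC}\subseteq\mathcal J_{\rm RFD}$ always holds, the RFD quotient coincides with it as well, giving $\mathrm{Pol}(O_F^+)_{\rm RFD}=\mathrm{Pol}(U_M^+)$. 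The only genuine subtlety, and the step I expect to require the most care, is the bookkeeping in the block computation leading to the two displayed identities; once $q\neq 1$ is available the scaling argument closes immediately, and the passage between the Kac and RFD ideals is routine.
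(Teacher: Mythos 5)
Your proposal is correct and follows essentially the same route as the paper: the same block decomposition of $U$ via relation (H), the same trace computation on the diagonal unitarity relations exploiting $q^{-4}\neq 1$ to place the off-diagonal blocks in $\mathcal{J}_{\rm KAC}$, identification of the quotient with ${\rm Pol}(U_M^+)$, and Chirvasitu's RFD theorem to close the argument. Your explicit use of the faithful tracial Haar state of $U_M^+$ to get the reverse inclusion $\mathcal{J}_{\rm KAC}\subseteq\ker\rho$ is a minor (and slightly more explicit) variant of the paper's concluding step.
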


\begin{proof}
This proof is similar to those of Theorems \ref{thm-RFD-O-CaseI} and \ref{thm-RFD-O-CaseII} in the next subsection, therefore we will give a rather detailed argument here, and later sketch only the main steps.
We decompose the fundamental corepresentation $U$ as
\[
U = \left(
\begin{array}{cc}
A & B \\
C & D
\end{array}
\right),
\]
with 
\begin{gather*}
A=(u_{jk})_{1\le j,k\le M}, B=(u_{jk})_{\genfrac{}{}{0pt}{2}{1\le j\le M}{M+1\le k\le 2M}} , C=(u_{jk})_{\genfrac{}{}{0pt}{2}{M+1\le j\le 2M}{1\le k\le M}}, \\
D= (u_{jk})_{M+1\le j,k\le 2M} \in M_M\big({\rm Pol}\big(O_F^+)\big).
\end{gather*}
The defining relation (H) of ${\rm Pol}(O_F^+)$ means that
\[
U = F \overline{U} F^{-1} =
\left(
\begin{array}{cc}
\overline{D} &\epsilon q^2 \overline{C}  \\
\epsilon q^{-2} \overline{B} & \overline{A} 
\end{array}
\right).
\]
So we can write $U$ as
\[
U = \left(
\begin{array}{cc}
A & \epsilon q^2 \overline{C}  \\
C & \overline{A}
\end{array}
\right),
\]
and therefore
\[
U^* = \left( 
\begin{array}{cc}
A^* &  C^* \\
\epsilon q^2 C^t     & A^t
\end{array}
\right).
\]
The unitarity condition for $U$ now reads
\begin{gather}\label{eq-unitary}
\left(
\begin{array}{cc}
A A^* + q^4 \overline{C} C^t & AC^* + \epsilon q^2 \overline{C} A^t \\
C A^* + \epsilon q^2 \overline{A} C^t & C C^* + \overline{A} A^t 
\end{array}
\right)
=
\left(
\begin{array}{cc}
I_M & 0 \\
0 & I_M
\end{array}
\right) =
\\
=
\left(
\begin{array}{cc}
A^* A+C^*C & \epsilon q^2 A^* \overline{C} + C^* \overline{A} \\
\epsilon q^2 C^t A + A^t C & q^4 C^t \overline{C} + A^t \overline{A}
\end{array}
\right).
\nonumber 
\end{gather}
The equalities of upper left corners of \eqref{eq-unitary} mean that for all $j,k=1, \ldots, M$
\[
\sum_{\ell=1}^M (u_{j\ell} u^*_{k\ell} + q^4 u^*_{j+M,\ell}u_{k+M,\ell}) = \delta_{jk} 1 = \sum_{\ell=1}^M  (u^*_{\ell j}u_{\ell k} + u^*_{\ell +M,j}u_{\ell+M,k} ).
\]
Setting $j=k$ and taking the sum, we get
\[
\sum_{j,\ell=1}^M (u_{j\ell} u^*_{j\ell} - u^*_{j\ell} u_{j\ell})
=
\sum_{j,\ell=1}^M (1-q^4) u^*_{j+M,\ell} u_{j+M,\ell}.
\]
Let $\tau$ be a tracial state on ${\rm Pol}(O_F^+)$. The equality above implies that
\[
\tau(u_{j+M,\ell}^*u_{j+M,\ell}) = 0
\]
for all $j,\ell\in\{1,\ldots,M\}$. So the generators $u_{jk}$ with $M+1\le j\le 2M$ and $1\le k\le M$, which form the matrix $C$, belong to the Kac ideal $\mathcal{J}_{\rm KAC}$.

If we divide by the *-ideal generated by the coefficients of $C$, then we see from Equation \eqref{eq-unitary} that the remaining generators $u_{jk}$ with $1\le j,k\le M$, which form the matrix $A$ --- or rather their images in the quotient *-algebra --- have to satisfy exactly the defining relations of ${\rm Pol}(U_M^+)$, i.e.,
\[
AA^*= I_M = A^*A \quad\text{ and }\quad \overline{A}A^t = I_M = A^t \overline{A}.
\]
The result now follows, since Chirvasitu proved that ${\rm Pol}(U_M^+)$ is RFD, cf.\ Theorem \ref{thm-chir}.
\end{proof}

\subsection{Case I: $F\overline{F}=I_N$}\label{subsec-caseII}

We now look at the case $F\overline{F}=I_N$, where we can assume that $F$ has the form given in Equation \eqref{eq-F-caseI}. But we will permute the rows and columns of $F$ to organize $F$ in blocks corresponding to the eigenvalues of $F^*F$.

\begin{thm}\label{thm-RFD-O-CaseI}
Let $F$ be of the form
\[
F = 
\left(
\begin{array}{cccccc}
0 & q_1 I_{M_1} & & & & \\ 
q_1^{-1} I_{M_1} & 0 & & & & \\
&& \ddots &&& \\
&&& 0 & q_r I_{M_r} & \\
&&& q_r^{-1} I_{M_r} & 0 & \\
&&&&& I_{N-2K}
\end{array}
\right)
\]
with $0<q_1<\cdots <q_r < 1$ and $K=M_1+\cdots+M_r$.

Then the RFD quotient and the Kac quotient of the CQG algebra ${\rm Pol}(O_F^+)$ are both equal to the free product
\[
\left(\bigstar_{\nu=1}^r {\rm Pol}(U_{M_\nu}^+)\right) \bigstar {\rm Pol}(O^+_{N-2K}).
\]
\end{thm}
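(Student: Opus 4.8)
The plan is to generalize the block-by-block analysis carried out in the proof of Proposition \ref{prop-one-block} to the case where $F^*F$ has several eigenvalue pairs $q_\nu^2 < 1 < q_\nu^{-2}$ together with the fixed block of eigenvalue $1$. First I would apply the relation (H) to the block decomposition of the fundamental corepresentation $U$ dictated by the displayed form of $F$: writing $U=(U_{ab})$ where the index blocks match the $2M_1,\dots,2M_r,N-2K$ structure, the condition $U=F\overline{U}F^{-1}$ expresses the lower-left $M_\nu\times M_\nu$ sub-block of each symplectic-type pair in terms of the conjugate of the upper-right sub-block, scaled by $q_\nu^2$ (exactly as $B=\epsilon q^2\overline{C}$ appeared before), while constraining the $(N-2K)$-block to be a self-conjugate orthogonal-type corepresentation.

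Next I would extract the Kac ideal. The key computational step is to write down the diagonal entries of the unitarity relation $UU^*=I=U^*U$ restricted to the blocks, take $j=k$, and sum over a suitable index set to obtain, for each pair-block $\nu$, an identity of the form $\sum(u_{j\ell}u_{j\ell}^*-u_{j\ell}^*u_{j\ell})=\sum(1-q_\nu^4)u^*_{\cdots}u_{\cdots}$ as in Proposition \ref{prop-one-block}. Applying any tracial state $\tau$ kills the left-hand side, and since $q_\nu\in(0,1)$ forces $1-q_\nu^4>0$, I conclude that every entry of the off-diagonal sub-block $C_\nu$ lies in the Kac ideal $\mathcal{J}_{\rm KAC}$. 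Doing this for all $\nu=1,\dots,r$ shows the off-diagonal ``$C$-type'' generators of every pair-block vanish in the Kac quotient.

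I would then quotient ${\rm Pol}(O_F^+)$ by the $^*$-ideal generated by all these $C_\nu$ entries. Reading off the unitarity relations in the quotient, each pair-block $\nu$ collapses to the defining relations of ${\rm Pol}(U_{M_\nu}^+)$ for the surviving diagonal sub-block $A_\nu$ (since the cross terms coupling $A_\nu$ to $C_\nu$ disappear and $D_\nu=\overline{A_\nu}$), while the $1$-eigenvalue block, being already self-conjugate orthogonal, satisfies the defining relations of ${\rm Pol}(O^+_{N-2K})$. The crucial point is that after eliminating the $C_\nu$, the relations no longer couple distinct blocks to each other, so the quotient is precisely the \emph{free product} $\bigl(\bigstar_{\nu=1}^r{\rm Pol}(U_{M_\nu}^+)\bigr)\bigstar{\rm Pol}(O^+_{N-2K})$. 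I would verify that the quotient map is surjective onto this free product and that no further relations survive, so that the isomorphism is genuine and not merely a quotient.

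Finally, to upgrade the Kac quotient to the RFD quotient I would invoke the RFD stability results already available: ${\rm Pol}(U_{M_\nu}^+)$ is RFD by Theorem \ref{thm-chir}, ${\rm Pol}(O^+_{N-2K})$ is RFD by the same theorem (or is trivial/commutative when $N-2K\le 1$, cf.\ Remark \ref{remark:com}), and the free product of RFD algebras is RFD by Proposition \ref{prop-free-prod}. Since the RFD quotient is always a quotient of the Kac quotient (by $\mathcal{J}_{\rm KAC}\subseteq\mathcal{J}_{\rm RFD}$), and here the Kac quotient is itself RFD, the two must coincide. I expect the main obstacle to be the bookkeeping in the middle step: verifying cleanly that quotienting out the $C_\nu$ entries decouples \emph{all} the remaining unitarity and (H)-relations into independent blocks with no residual cross-block constraints, so that the resulting algebra is the free product rather than some amalgamated or more rigid object. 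The trace-positivity argument isolating $C_\nu$ in $\mathcal{J}_{\rm KAC}$ is the conceptual heart, but it transfers almost verbatim from Proposition \ref{prop-one-block}; the genuinely new work is organizing the multi-block relations.
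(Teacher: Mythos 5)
There is a genuine gap, and it concerns the cross-blocks of $U$ that your decomposition silently omits. The fundamental corepresentation is a full $N\times N$ matrix of generators, so besides the $2M_\nu\times 2M_\nu$ diagonal pair-blocks and the $(N-2K)$-block, $U$ contains off-diagonal blocks coupling \emph{different} eigenvalue pairs (blocks $A_{\rho\mu}$ and $C_{\rho\mu}$ with $\rho\neq\mu$ in the paper's notation) and blocks coupling each pair-block to the $1$-eigenspace (the paper's $X_\mu$ and $R_\mu$). Your claim that ``after eliminating the $C_\nu$, the relations no longer couple distinct blocks to each other, so the quotient is precisely the free product'' is false as stated: even after killing every $C$-type generator and the $X$- and $R$-blocks, the surviving matrix $A=(A_{\rho\mu})_{\rho,\mu}$ is a single $K\times K$ unitary whose off-diagonal blocks genuinely couple the eigenvalue sectors. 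What one obtains at that stage is ${\rm Pol}(U_D^+)\bigstar{\rm Pol}(O^+_{N-2K})$ with $D={\rm diag}(q_1^2I_{M_1},\dots,q_r^2I_{M_r})$, not yet the asserted free product. The paper closes this by invoking Theorem \ref{thm-RFD-U} (So{\l}tan's computation of the Kac quotient of $U_Q^+$ together with Chirvasitu's RFD results), which is the mechanism that kills the off-diagonal $A_{\rho\mu}$; your proposal never cites this result nor supplies any substitute argument, so the final decoupling into $\bigstar_\nu{\rm Pol}(U_{M_\nu}^+)$ is unjustified.

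A secondary, related issue: the trace argument does \emph{not} transfer ``almost verbatim'' from Proposition \ref{prop-one-block}. In the multi-block case the diagonal unitarity relations mix contributions from all blocks, and the terms involving $X_\mu$ and $R_\mu$ enter the two natural difference equations with coefficients of opposite or indefinite sign (e.g.\ $+\tau({\rm Tr}(X_\mu^*X_\mu))-\tau({\rm Tr}(R_\mu^*R_\mu))$), so positivity alone does not immediately force them to vanish. The paper has to combine three families of relations --- Equations \eqref{eq-RFD-O-condI}, \eqref{eq-RFD-O-condII} and \eqref{eq-RFD-O-condIII} --- add the resulting identities and feed in \eqref{eq-RFD-diff} before all coefficients become non-negative and the $C$-, $X$- and $R$-blocks can be placed in $\mathcal{J}_{\rm KAC}$. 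You correctly identify the one-block mechanism and the final RFD upgrade via Proposition \ref{prop-free-prod} and Theorem \ref{thm-chir}, but the bookkeeping you defer as routine is precisely where the new content of the theorem lies.
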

\begin{proof}
The proof is similar to that of Proposition \ref{prop-one-block}.

Writing $U$ as a block matrix and using the relation between the blocks that follow from (H), we can express $U$ as
\begin{equation}\label{eq-formU}
U=\left( 
\begin{array}{cccccccc}
A_{11} & q_1^2 \overline{C_{11}} & A_{12} & q_1q_2\overline{C_{12}} & \hdots & R_1 \\[5pt]
C_{11} & \overline{A_{11}} & C_{12} & q_1q_2^{-1}\overline{A_{12}} & \hdots & q_1^{-1}\overline{R_1} \\[5pt]
A_{21} & q_2q_1\overline{C_{21}} & A_{22} & q_2^2 \overline{C_{22}} & \hdots & R_2 \\[5pt]
C_{21} & q_2q_1^{-1}\overline{A_{21}} & C_{22} & \overline{A_{22}} & \hdots & q_2^{-1}\overline{R_2} \\[5pt]
\vdots & \vdots & \vdots & \vdots &\ddots& \vdots \\[5pt]
X_1 & q_1\overline{X_1} & X_2 & q_2\overline{X_2} & \hdots & Z
\end{array}
\right),
\end{equation}
where furthermore the coefficients of $Z$ are hermitian, i.e., $Z=\overline{Z}$.

If we look at the diagonal blocks of the unitarity condition $U^*U=I_N=UU^*$, we get for every $\mu =1, \ldots,r$
\begin{gather}
\sum_{\rho=1}^r \left(A^*_{\rho\mu} A_{\rho\mu} + C^*_{\rho\mu}C_{\rho\mu}\right) + X_\mu^* X_\mu
= I_{M_\mu}
\nonumber
\\
= \sum_{\rho=1}^r \left(A_{\mu\rho}A^*_{\mu\rho} + q_\mu^2 q^2_\rho \overline{C_{\mu\rho}} C^t_{\mu\rho}\right) + R_\mu R^*_\mu, 
\label{eq-RFD-O-condI}
\\
\sum_{\rho=1}^r \left(q^2_\mu q^2_\rho C^t_{\rho\mu} \overline{C_{\rho\mu}} + q^2_\rho q^{-2}_\mu A^t_{\rho\mu}\overline{A_{\rho\mu}}\right) + q^2_\mu X^t_\mu\overline{X_\mu}  = I_{M_\mu}
\nonumber 
\\
= \sum_{\rho=1}^r \left(C_{\mu\rho}C^*_{\mu\rho} + q^2_\mu q^{-2}_\rho \overline{A_{\mu\rho}} A^t_{\mu\rho}\right) + q_\mu^{-2} \overline{R_\mu}R^t_\mu, 
\label{eq-RFD-O-condII}
\\
Z^tZ + \sum_{\rho=1}^r \left( R^*_\mu R_\mu + q_\mu^{-2} R_\rho^t \overline{R_\rho}\right)
= I_{N-2K} = ZZ^t + \sum_{\rho=1}^r \left(X_\rho X^*_\rho + q^2_\rho \overline{X_\rho} X^t_\rho\right).
\label{eq-RFD-O-condIII}
\end{gather}
Note that if
\[
A=(a_{jk})_{\genfrac{}{}{0pt}{2}{1\le j \le J}{1\le k\le K}}\in M_{J\times K}(A)
\]
is a matrix with coefficients in some *-algebra $A$ and $\tau$ is a tracial state on $A$, then we have
\[
\tau\circ{\rm Tr}(A^*A) = \sum_{j=1}^J\sum_{k=1}^K \tau(a^*_{jk} a_{jk}) = \tau\circ{\rm Tr}(AA^*) = \tau\circ{\rm Tr}(\overline{A}A^t) = \tau\circ{\rm Tr}(A^t\overline{A}).
\]
So if $\tau$ is a tracial state on ${\rm Pol}(O_F^+)$ and we apply $\tau\circ{\rm Tr}$ to Equation \eqref{eq-RFD-O-condIII}, then we get
\begin{equation}\label{eq-RFD-diff}
\sum_{\rho=1}^r (1+q_\rho^2) \tau\big({\rm Tr}(X^*_\rho X_\rho)\big) = \sum_{\rho=1}^r (1+q_\rho^{-2})\tau\big({\rm Tr}(R_\rho^*R_\rho)\big).
\end{equation}

If we now take the sum over $\mu$ of the difference between the left-hand-side and the right-hand-side in Equations \eqref{eq-RFD-O-condI} and \eqref{eq-RFD-O-condII}, and apply $\tau\circ{\rm Tr}$, then we get
\begin{gather*}
\sum_{\rho,\mu=1}^r (1-q_\mu^2q_\rho^2)\tau\big({\rm Tr}(C_{\rho\mu}^*C_{\rho\mu})\big) + \sum_{\mu=1}^r \tau\big({\rm Tr}(X^*_\mu X_\mu)\big) - \sum_{\mu=1}^r \tau\big({\rm Tr}(R_\mu^* R_\mu))\big) = 0, 
\\
\sum_{\rho,\mu=1}^r (1-q_\mu^2q_\rho^2)\tau\big({\rm Tr}(C_{\rho\mu}^*C_{\rho\mu})\big) + \sum_{\mu=1}^r q_\mu^2 \tau\big({\rm Tr}(X^*_\mu X_\mu)\big) - \sum_{\mu=1}^r q_{\mu}^{-2} \tau\big({\rm Tr}(R_\mu^* R_\mu)\big) = 0. 
\end{gather*}
Adding these two relations and taking Equation \eqref{eq-RFD-diff} into account, we get $\tau\big({\rm Tr}(C^*_{\rho\mu}C_{\rho\mu})\big)=0$ for all $\rho,\mu\in\{1,\ldots,r\}$; by positivity this means that all the generators that appear in the $C$-blocks are contained in the Kac ideal $\mathcal{J}_{\rm KAC}$.

By \eqref{eq-RFD-O-condI}, we then also have $\tau\big({\rm Tr}(X_\mu^*)X_\mu\big)=\tau\big({\rm Tr}(R_\mu^*R_\mu)\big)$, so, plugging this into \eqref{eq-RFD-diff}, 
\[
\sum_{\rho=1}^r (q^{-2}_\rho - q_\rho^2) \tau\big({\rm Tr}(X^*_\rho X_\rho)\big) = 0,
\]
and we get $\tau\big({\rm Tr}(X_\mu^*X_\mu)\big)=0=\tau\big({\rm Tr}(R_\mu^*R_\mu)\big)$ for $\mu=1,\ldots, r$, since all terms in the above sum are non-negative. Once again using the fact that $\tau$ is positive we deduce that all the generators that appear in the $X$- and $R$-blocks are contained in the Kac ideal $\mathcal{J}_{\rm KAC}$.

Denote by
\[
A=(A_{\rho\mu})_{1\le\rho,\mu\le r} \in M_K\big({\rm Pol}(O_F^+)\big)
\]
the matrix obtained from $U$ by deleting the generators in the even rows and columns in the block decomposition in Equation \eqref{eq-formU}, as well as the last row and column.

If we divide the *-algebra ${\rm Pol}(O_F^+)$ by the *-ideal generated by all $C_{\rho\mu}$, $X_\mu$ and $R_\mu$, then unitarity relation $U^*U=I_N=UU^*$ reduces to
\begin{gather*}
A^*A = I_K = AA^* \quad\text{ and }\quad D\overline{A}D^{-1}A^t = I_K = A^t D\overline{A}D^{-1}, \\
Z=\overline{Z} \quad\text{ and }\quad ZZ^t = I_{N-2K} = Z^t Z,
\end{gather*}
where
\[
D=\left(
\begin{array}{ccc}
q^2_1 I_{M_1} & &  \\
& \ddots & \\
&& q^2_r I_{M_r} 
\end{array}
\right).
\]
This means that the quotient ${\rm Pol}(O_F)/\langle C_{\rho\mu}, X_\mu, R_\mu:\rho,\mu=1,\ldots,r\rangle$ is equal to the free product of a copy of ${\rm Pol}(U_D^+)$, generated by the coefficients of the $A_{\rho\mu}$, $\rho,\mu=1,\ldots,r$, and a copy of ${\rm Pol}(O_{N-2K}^+)$, generated by the coefficients of $Z$.

Now we can conclude with Theorem \ref{thm-RFD-U}.
\end{proof}

\subsection{Case II: $F\overline{F}=-I_N$}

Let us now consider the case $F\overline{F}=-I_N$ and $F$ a matrix of the form given in Equation \eqref{eq-F-caseII}.

\begin{thm}\label{thm-RFD-O-CaseII}
Let $N$ be an even positive integer and let $F\in M_N$ be of the form
\[
F = 
\left(
\begin{array}{ccccc}
0 & q_1 I_{M_1} & & & \\
-q_1^{-1} I_{M_1} & 0 & & & \\
&& \ddots && \\
&&& 0 & q_r I_{M_r} \\
&&& -q_r^{-1} I_{M_r} & 0 \\
\end{array}
\right),
\]
with $0<q_1<\cdots q_{r-1}<q_r=1$, $M_1,\ldots,M_{r-1}\ge 1$, $M_r\ge 0$, $M_1+\cdots+M_r=N/2$. Note that $M_r=0$ if $1$ is not an eigenvalue of $F^*F$.

The RFD quotient and the Kac quotient of the CQG algebra ${\rm Pol}(O_F^+)$ are both equal to the free product
\[
\left(\bigstar_{\nu=1}^{r-1} {\rm Pol}(U_{M_\nu}^+)\right) \bigstar {\rm Pol}(O^+_{J}),
\]
with
\[
J =
\left(
\begin{array}{cc}
0 & I_{M_r} \\
-I_{M_r} & 0
\end{array}
\right)
\]
\end{thm}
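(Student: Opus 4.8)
The plan is to run the argument of Theorem~\ref{thm-RFD-O-CaseI} almost verbatim, the one genuinely new feature being that the block attached to the eigenvalue $q_r=1$ does not collapse to a unitary quantum group but survives as the factor ${\rm Pol}(O_J^+)={\rm Pol}(O_{J_{M_r}}^+)$ of Proposition~\ref{prop-RFD-J}. First I would write $U=(U_{\mu\rho})_{\mu,\rho=1}^r$ as a block matrix with $U_{\mu\rho}\in M_{2M_\mu\times 2M_\rho}\big({\rm Pol}(O_F^+)\big)$ and impose the defining relation (H), $U=F\overline{U}F^{-1}$, block by block. Since $F$ is the direct sum of the symplectic blocks with entries $q_\mu$ and $-q_\mu^{-1}$, a short computation shows that each block has the form
\[
U_{\mu\rho}=
\begin{pmatrix}
A_{\mu\rho} & -q_\mu q_\rho\,\overline{C_{\mu\rho}}\\[3pt]
C_{\mu\rho} & q_\mu^{-1}q_\rho\,\overline{A_{\mu\rho}}
\end{pmatrix},
\]
the entries of the $A_{\mu\rho},C_{\mu\rho}\in M_{M_\mu\times M_\rho}$ being the remaining independent generators. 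In particular $U_{rr}$, where $q_r=1$, is exactly the fundamental corepresentation of ${\rm Pol}(O_{J_{M_r}}^+)$ as in Proposition~\ref{prop-RFD-J}, while for $\mu<r$ the diagonal block $U_{\mu\mu}$ is of the type treated in Proposition~\ref{prop-one-block}.

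Next I would write out the $(1,1)$- and $(2,2)$-corners of the diagonal blocks of the unitarity relations $U^*U=I_N=UU^*$, and apply $\tau\circ{\rm Tr}$ for an arbitrary tracial state $\tau$, using $\tau\circ{\rm Tr}(X^*X)=\tau\circ{\rm Tr}(XX^*)=\tau\circ{\rm Tr}(\overline{X}X^t)$. Setting $\alpha_{\mu\rho}=\tau\big({\rm Tr}(A_{\mu\rho}^*A_{\mu\rho})\big)$ and $\gamma_{\mu\rho}=\tau\big({\rm Tr}(C_{\mu\rho}^*C_{\mu\rho})\big)$, summing the $(1,1)$-corners of $U^*U=I_N$ and of $UU^*=I_N$ over $\mu$ and subtracting yields
\[
\sum_{\mu,\rho=1}^r (1-q_\mu^2q_\rho^2)\,\gamma_{\mu\rho}=0.
\]
Because $0<q_1<\cdots<q_{r-1}<q_r=1$, the coefficient $1-q_\mu^2q_\rho^2$ is strictly positive unless $\mu=\rho=r$, so positivity forces $C_{\mu\rho}\in\mathcal{J}_{\rm KAC}$ for every $(\mu,\rho)\ne(r,r)$, while $C_{rr}$ is left untouched. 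Feeding this back into the $\mu=r$ relations and, for $\mu=r$, subtracting the $(2,2)$-corner from the $(1,1)$-corner in each of $U^*U=I_N$ and $UU^*=I_N$, we obtain $\sum_\rho(q_\rho^{-2}-1)\,\alpha_{\rho r}=0$ and $\sum_\rho(1-q_\rho^2)\,\alpha_{r\rho}=0$; as these coefficients are strictly positive for $\rho<r$ and vanish for $\rho=r$, we also get $A_{\rho r},A_{r\rho}\in\mathcal{J}_{\rm KAC}$ for every $\rho<r$. Thus the only surviving generators are the $A_{\mu\rho}$ with $\mu,\rho<r$ together with $A_{rr}$ and $C_{rr}$.

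Dividing ${\rm Pol}(O_F^+)$ by the $^*$-ideal generated by all generators just shown to lie in $\mathcal{J}_{\rm KAC}$, the remaining unitarity relations decouple. The matrix $(A_{\mu\rho})_{1\le\mu,\rho\le r-1}$ satisfies precisely the defining relations of ${\rm Pol}(U_{D'}^+)$ with $D'={\rm diag}(q_1^2I_{M_1},\dots,q_{r-1}^2I_{M_{r-1}})$, the pair $(A_{rr},C_{rr})$ satisfies precisely the defining relations of ${\rm Pol}(O_{J_{M_r}}^+)$, and since every block linking these two families of generators has been killed, $U$ becomes block diagonal and no cross relations remain. Hence this quotient equals the free product ${\rm Pol}(U_{D'}^+)\bigstar{\rm Pol}(O_{J_{M_r}}^+)$. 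Now Theorem~\ref{thm-RFD-U} identifies the Kac and RFD quotient of ${\rm Pol}(U_{D'}^+)$ with $\bigstar_{\nu=1}^{r-1}{\rm Pol}(U_{M_\nu}^+)$, Proposition~\ref{prop-RFD-J} shows that ${\rm Pol}(O_{J_{M_r}}^+)$ is RFD, and Proposition~\ref{prop-free-prod} gives that the free product $\big(\bigstar_{\nu=1}^{r-1}{\rm Pol}(U_{M_\nu}^+)\big)\bigstar{\rm Pol}(O_{J_{M_r}}^+)$ is RFD. Since an RFD CQG algebra is automatically of Kac type (Proposition~\ref{prop1}) and the kernel of the resulting surjection from ${\rm Pol}(O_F^+)$ lies in $\mathcal{J}_{\rm KAC}$, this free product is simultaneously the Kac and the RFD quotient.

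The main obstacle is the honest treatment of the top block. In Case~I the eigenvalue $1$ came from a separate summand $I_{N-2K}$ of $F$ and produced a factor ${\rm Pol}(O_{N-2K}^+)$; here it arises from a symplectic pair with $q_r=1$, and it is precisely the vanishing of $1-q_r^4$ that prevents $C_{rr}$ from being dragged into the Kac ideal, leaving the symplectic corepresentation $U_{rr}$ intact. One must therefore verify the extra trace identities that annihilate the cross blocks $A_{\rho r}$ and $A_{r\rho}$, so that $U_{rr}$ genuinely splits off as an independent ${\rm Pol}(O_{J_{M_r}}^+)$ factor rather than merging into a larger unitary block, and one must remember that the RFD property of this factor is supplied by Proposition~\ref{prop-RFD-J} and not by Theorem~\ref{thm-chir}.
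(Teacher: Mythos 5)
Your proposal is correct and follows essentially the same route as the paper: the same block decomposition of $U$ forced by (H), the same application of $\tau\circ{\rm Tr}$ to the diagonal blocks of the unitarity relations to place the $C_{\mu\rho}$ with $(\mu,\rho)\neq(r,r)$ in $\mathcal{J}_{\rm KAC}$, and the same identification of the resulting quotient followed by Chirvasitu's RFD results. The only (harmless) divergence is that the paper also kills \emph{all} off-diagonal blocks $A_{\mu\rho}$, $\mu\neq\rho$, by an inductive trace argument before identifying the quotient, whereas you retain those with $\mu,\rho<r$, pass through the intermediate quotient ${\rm Pol}(U_{D'}^+)\bigstar{\rm Pol}(O_{J_{M_r}}^+)$, and delegate them to Theorem~\ref{thm-RFD-U} exactly as in the paper's own proof of Theorem~\ref{thm-RFD-O-CaseI}; your explicit derivation of $A_{\rho r},A_{r\rho}\in\mathcal{J}_{\rm KAC}$ from the $\nu=r$ unitarity relations is a detail the paper leaves implicit.
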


\begin{proof}
Like in the proofs of Proposition \ref{prop-one-block} and Theorem \ref{thm-RFD-O-CaseI}, we write $U$ as a block matrix. Since $U=F\overline{U}F$, we can write
\[
U = \left(
\begin{array}{ccccccc}
A_{11} & -q_1^2 \overline{A_{11}} & A_{12} & -q_1q_2 \overline{C_{12}} & \hdots & A_{1r} & -q_1q_r \overline{C_{1r}} \\
C_{11} & \overline{A_{11}} & C_{12} & q_1^{-1}q_2\overline{A_{12}} & \hdots & C_{1r} & q_1^{-1}q_r \overline{A_{1r}} 
\\
A_{21} & -q_2q_1 \overline{C_{21}} & A_{22} & -q^2_2 \overline{C_{22}} & \hdots & A_{2r} & -q_2q_r \overline{C_{2r}} \\
C_{21} & q_2^{-1} q_1 \overline{A_{21}} & C_{22} & \overline{A_{22}} & \hdots & C_{2r} & q_2^{-1}q_r \overline{A_{2r}} 
\\
\vdots & \vdots & \vdots & \vdots &\ddots & \vdots & \vdots 
\\
A_{r1} & q_r q_1 \overline{C_{r1}} & A_{r2} & -q_2q_2 \overline{C_{r2}} &\hdots & A_{rr} & -q_r^2 \overline{C_{rr}} \\
C_{r1} & q_r^{-1}q_1 \overline{A_{r1}} & C_{r2} & q_r^{-1}q_2 \overline{A_{r2}} & \hdots & C_{rr} & \overline{A_{rr}}
\end{array}
\right).
\]
The unitarity conditions on the diagonal blocks read ($\nu =1,\ldots,r$)
\begin{gather}
\sum_{\mu=1}^r \left(A^*_{\mu\nu} A_{\mu\nu} + C^*_{\mu\nu} C_{\mu\nu} \right)
= I_{M_\nu}
=\sum_{\mu=1}^r \left(A_{\nu\mu} A^*_{\nu\mu} + q^2_\mu q^2_\nu \overline{C_{\mu\nu}} C^t_{\mu\nu} \right), 
\label{eq-RFD-caseII-condI}\\
\sum_{\mu=1}^r \left( q_\mu^2 q_\nu^2 C^t_{\mu\nu} \overline{C_{\mu\nu}} + q^{-2}_\mu q^2_\nu A^t_{\mu\nu} \overline{A_{\mu\nu}} \right)
= I_{M_\nu}
=\sum_{\mu=1}^r \left( C_{\nu\mu} C^*_{\nu\mu} + q_\nu^{-2}q_\mu^2 \overline{A_{\nu\mu}} A^t_{\nu\mu} \right).
\label{eq-RFD-caseII-condII}
\end{gather}

Letting $\tau$ be a tracial state on ${\rm Pol}(O_F^+)$ and applying $\tau\circ{\rm Tr}$ to the difference of the left-hand-side and the right-hand-side in Equation \eqref{eq-RFD-caseII-condI}, we get
\[
\sum_{\nu,\mu=1}^r (1-q_\mu^2 q_\nu^2) \tau\big({\rm Tr}(C_{\mu \nu}^*C_{\mu \nu})\big) =0,
\]
which implies that the coefficients appearing in all the $C$-blocks, except possibly $C_{rr}$, belong to the Kac ideal $\mathcal{J}_{\rm KAC}$.

Taking now the differences of the left-hand-sides, or, respectively, right-hand-sides, in Equations \eqref{eq-RFD-caseII-condI} and \eqref{eq-RFD-caseII-condII},  we get
\begin{gather*}
\sum_{\mu=1}^{r-1} (q_\mu^{-2} q_\nu^2-1) \tau\big({\rm Tr}(A_{\mu\nu}^*A_{\mu\nu})\big) =0, \\
\sum_{\mu=1}^{r-1} (1-q^2_\mu q_\nu^{-2}) \tau\big({\rm Tr}(A_{\nu\mu}A_{\nu\mu})^*\big) =0, 
\end{gather*}
for $\nu\in\{1,\ldots,r-1\}$. 
From these two relations we can prove by induction that $\tau\big({\rm Tr}(A_{\mu\nu}^*A_{\mu\nu})\big)=0$ for all $\mu,\nu=1, \ldots, r-1$ with $\mu\not=\nu$.

Denote by $\mathcal{J}$ the *-ideal generated by the $u_{jk}$ that have been regrouped in the blocks $C_{\mu\nu}$ with $(\mu,\nu)\not=(r,r)$, and in the blocks $A_{\mu\nu}$ with $\mu\not=\nu$.
It is not difficult to show that ${\rm Pol}(O_F^+)/\mathcal{J}\cong \left(\bigstar_{\nu=1}^r {\rm Pol}(U_{M_\nu}^+)\right) \bigstar {\rm Pol}(O^+_{F_0})$. As the latter CQG algebra is RFD by Chirvasitu's results, it follows that this is indeed the RFD quotient and also the Kac quotient of ${\rm Pol}(O_F^+)$.
\end{proof}

\section*{Acknowledgements}
 We acknowledge support by the French 
MAEDI and MENESR and by the Polish MNiSW through the Polonium programme.
AS was partially supported by the NCN (National Science Centre) grant 2014/14/E/ST1/00525.
UF was supported by the French `Investissements d’Avenir' program, project ISITE-BFC
(contract ANR-15-IDEX-03), and by an ANR project (No. ANR-19-CE40-0002).
We thank Anna Kula for several discussions on related topics.


\end{document}